\numberwithin{equation}{section}
\newtheorem{thm}{Theorem}[section]
\newtheorem{lemma}[thm]{Lemma}
\newtheorem{prop}[thm]{Proposition}
\newtheorem{cor}[thm]{Corollary}
{\theorembodyfont{\rmfamily}

\newtheorem{rmk}[thm]{Remark}
}
\newcommand{\qed}{\hfill \mbox{\raggedright \rule{.07in}{.1in}}}
\newenvironment{proof}{\vspace{1ex}\noindent{\bf
Proof}\hspace{0.5em}}{\hfill\qed\vspace{1ex}}
\newcommand{\R}{{\mathbb R}}
\newcommand{\eps}{\epsilon}
\newcommand{\Int}{\operatorname{Int}}
\newcommand{\rank}{\operatorname{rank}}
\newcommand{\diag}{\operatorname{diag}}
\title{Time-reversibility and nonvanishing L\'evy area}
\author{Georg Gottwald \thanks{School of Mathematics and Statistics, University of Sydney, Sydney 2006 NSW, Australia}
\and 
Ian Melbourne \thanks{Mathematics Institute, University of Warwick, Coventry, CV4 7AL, UK}
}
\date{20 October, 2022. Updated 10 August, 2023}
\begin{document}

\maketitle

 \begin{abstract}
We give a complete description and clarification of the structure of the L\'evy area correction to It\^o/Stratonovich stochastic integrals arising as limits of time-reversible deterministic dynamical systems. In particular, we show that time-reversibility forces the L\'evy area to vanish only in very specific situations that are easily classified.
 \end{abstract}

\section{Introduction} 
The classical Wong-Zakai question~\cite{WongZakai65} from 1965 is concerned with weak convergence of smooth processes $W_n$ to Brownian motion $W$ and the consequences for the interpretation of the corresponding stochastic integral 
$\int W\,dW$.

In simple situations~\cite{WongZakai65,GM13b}, the limiting stochastic integrals are Stratonovich, denoted  $\int W\circ dW$, but numerous counterexamples exist in higher dimensions~\cite{McShane72,Sussmann91}. In general, there is a correction, reminiscent of the It\^{o}-Stratonovich correction, given by a deterministic quantity known as the L\'evy area $E$, whereby $\int_0^t W_n\,dW_n$ converges weakly to $\int_0^t W\circ dW+ Et$ as $n\to\infty$. Many references give closed-form formulas for the L\'evy area $E$, see~\cite{BassEtAl02,CFKMZ22,DelongEtAl14,FrizOberhauser09,KM16,Lejay05,Lim21,LopusanschiSimon18,MacKay10}.

It is by now well-understood (if not always well-known) that the L\'evy area is an important and nontrivial correction to the Wong-Zakai question.
However, there are few investigations of whether this formula for $E$ as displayed in Section~\ref{sec:setup} (which looks like it may be nonzero) is actually nonzero in the presence of additional constraints such as time-reversibility. 

The L\'evy area is skew-symmetric ($E^T=-E$) and hence vanishes in the scalar case. In higher dimensions, L\'evy area corrections vanish as a consequence of exactness, or commutativity of the defining vector fields, but such conditions are atypical outside the scalar case.
On the other hand, an
example where it is proved that $E\neq0$ is given by Hairer, Pavliotis \& Stuart, see~\cite[Section~11.7.7]{PavliotisStuart}.

From time to time, we have been asked whether time-reversibility can force ${E=0}$. (The example of Hairer {\em et al.}\ in~\cite{PavliotisStuart} is not time-reversible.) In dispersing billiard examples considered by Chernov \& Dolgopyat~\cite{ChernovDolgopyat09}, the L\'evy area is indeed zero as a consequence of time-reversibility and the structure of the equations. For Markov processes, there is a related condition,
\emph{detailed balance}, which forces $E=0$, see for example~\cite[Section~3.3.2]{JiangQianQian} or~\cite[Section~5.1]{Pavliotis14}. 
See also~\cite[Section~1.4.2]{LopusanschiSimon18} for further comments on $E$ being zero in certain time-reversible situations.

The recent work~\cite{DiamantakisHolmPavliotisArxiv} gives numerical evidence that $E\neq0$ in certain examples, but it is written (at the end of Section~4 of that paper) that $E$ could be negligible in many situations.
On the other hand, it has recently been conjectured that in numerical simulations of certain stochastic systems which were obtained via some stochastic parametrisation it is numerically advantageous to neglect the L\'evy area and set $E=0$ \cite{Fiorini23,BoulvardMemin23}. 

In this paper, we offer a complete description and clarification of the structure of the L\'evy area.
In particular, we classify the constraints on $E$ imposed by time-reversibility. The cases where $E$ is forced to vanish are easily described. 
Outside of these rare situations, we find that $E$ is typically far from negligible.

\begin{rmk}
Since our conclusions are contrary to those for systems satisfying
detailed balance,
our results can be viewed as advising caution on when to invoke detailed balance as a modelling assumption.
\end{rmk}

The remainder of this paper is organised as follows.
Section~\ref{sec:setup} contains the setup in this paper and gives a rough description of our main result, namely Theorem~\ref{thm:E0}.
In Section~\ref{sec:TR}, we introduce time-reversibility and determine the constraints imposed by the time-reversibility on the L\'evy area.
In Section~\ref{sec:E0}, we state and prove Theorem~\ref{thm:E0}, showing that there are no further constraints on the L\'evy area beyond those determined in Section~\ref{sec:TR}.
In Section~\ref{sec:surj}, we obtain a local version of this result: namely that there are no constraints on the L\'evy area under small perturbations of the fast dynamics.
Finally, in Section~\ref{sec:a}, we verify that, in reasonable situations, nonvanishing L\'evy area leads as anticipated to nontrivial corrections to limiting stochastic integrals.

\section{The setup}
\label{sec:setup}

We consider fast-slow ordinary differential equations (ODEs) on $\R^d\times\R^m$ of the form
\begin{align} \label{eq:fs} \nonumber
\dot x & = a(x)+\eps^{-1}b(x)v(y), \quad x(0)=\xi\in\R^d \\
\dot y & = \eps^{-2}g(y), \qquad \qquad\qquad y(0)\in(\Lambda,\mu)
\end{align}
where $\Lambda\subset\R^m$ is a compact subset and $\mu$ is a Borel probability measure on $\Lambda$.
The functions 
\[
a:\R^d\to\R^d, \quad b:\R^d\to\R^{d\times d}, \quad v:\R^m\to\R^d, \quad g:\R^m\to\R^m,
\]
 are assumed to be $C^r$ (for some $r\ge1$)
with $\int_\Lambda v\,d\mu=0$.
Let $g_t:\Lambda\to\Lambda$ denote the flow on $\Lambda$ generated by the ODE $\dot y = g(y)$.
We assume that $\mu$ is $g_t$-invariant, ergodic and mixing.

The aim of homogenisation is to establish, as $\eps\to0$ in~\eqref{eq:fs}, a limiting stochastic differential equation (SDE) of the form
\begin{equation} \label{eq:SDE}
dX= \tilde a(X)\,dt +b(X)\circ dW, \quad X(0)=\xi ,
\end{equation}
such that $x=x^{(\eps)}$ converges weakly to $X$. 
Here, $W$ is $d$-dimensional Brownian motion, the stochastic integral $b(X)\circ dW$ has the Stratonovich interpretation, and $\tilde a$ is a modified drift term incorporating the correction (if any) to the Stratonovich integral.

In the deterministic setting of~\eqref{eq:fs}, convergence to an SDE of the form~\eqref{eq:SDE} was obtained in~\cite{KM16} under suitable chaoticity assumptions (subsequently optimised in~\cite{CFKMZ22,Fleming22,KKM22}) on the fast dynamics $\dot y = g(y)$.
Under these assumptions, we have convergent series of Green-Kubo-type:
\begin{description}
\item[Covariance]
$\Sigma   = \int_0^\infty\int_\Lambda \big\{v\otimes(v\circ g_t)+(v\circ g_t)\otimes v\big\}\,d\mu\,dt$,
\item[L\'evy area] $E  = \int_0^\infty\int_\Lambda \big\{v\otimes(v\circ g_t)-(v\circ g_t)\otimes v\big\}\,d\mu\,dt$,
\end{description}
where $u\otimes v=uv^T\in\R^{d\times d}$ for $u,v\in\R^d$.
These series define
a positive semi-definite symmetric matrix $\Sigma\in\R^{d\times d}$ and a skew-symmetric matrix $E\in\R^{d\times d}$.
Moreover (under smoothness assumptions on $a$ and $b$ which play no further role in this paper), the solutions $x^{(\eps)}$ converge weakly to solutions of the SDE~\eqref{eq:SDE} where the Brownian motion $W$ has covariance matrix $\Sigma$ and
the modified drift term is given by
\begin{equation} \label{eq:a}
\tilde a(X)=a(X)+\tfrac12 \sum_{\alpha,\beta,\gamma=1}^d E^{\gamma\beta}\partial_\alpha b^\beta(X)b^{\alpha\gamma}(X).
\end{equation}
Here, $Z^{ij}$ denotes the $(i,j)$'th entry of a matrix $Z$ and $b^\beta$ denotes the $\beta$'th column of~$b$.

Generally, $\Sigma$ is positive definite in the setting of~\cite{KM16}. Indeed, the case $\det\Sigma=0$ is infinitely unlikely in a sense that can be made precise, see for example~\cite[Section~2.3]{CFKMZ19}.
Given its antisymmetry, a natural question is to ask whether the L\'evy area $E$ may be forced to vanish in certain circumstances. 
Clearly, if $d=1$ then $E=0$. 
In addition, if $v$ transforms as $v\circ R=v$ or $v\circ R=-v$, where $R$ is a time-reversal symmetry for the fast dynamics, then again it is easily verified (see Remark~\ref{rmk:E=0}) that $E=0$.
This situation can occur in simple physical situations where $v$ represents position or velocity, such as in the dispersing billiards examples in~\cite{ChernovDolgopyat09}.

In this paper we show that the cases $v\circ R=v$ and $v\circ R=-v$ are the only situations where time-reversal symmetry forces the L\'evy area to vanish, and typically $E\neq0$ for the remaining time-reversible systems. In particular, we show that $v$ transforms as $v\circ R=Av$ where $A=I_{d^+}\oplus(-I_{d^-})$ in appropriate coordinates (with $d^+ + d^-=d$), and that 
$E=\left(\begin{array}{cc} 0 & E_0 \\ -E_0^T & 0 \end{array}\right)$
 where $E_0$ is a general $d^+\times d^-$ matrix.

\section{Time-reversal symmetry}
\label{sec:TR}

In this section, we introduce time-reversal symmetry into the fast-slow ODE~\eqref{eq:fs} and derive simplified formulas for the covariance $\Sigma$ and L\'evy area $E$. Our only assumption in this section is that the Green-Kubo-type formulas for $\Sigma$ and $E$ converge.

We assume that there is a time-reversal symmetry $(x,y)\mapsto (Sx,Ry)$
where $S\in\R^{d\times d}$ and $R:\Lambda\to\Lambda$ satisfy
$S^2=I$ and $R\circ R=I$. 
As usual, this means that $(Sx(-t),Ry(-t))$ is a solution of~\eqref{eq:fs} whenever $(x(t),y(t))$ is a solution.
We suppose also that $\mu$ is $R$-invariant.

For the fast dynamics, time-reversibility means that 
\begin{equation} \label{eq:gt}
g_t(Ry)=Rg_{-t}(y)
\quad\text{for all $y\in\Lambda$, $t\in\R$.}
\end{equation}
Equivalently, $g(Ry)=-Rg(y)$ for all $y\in\Lambda$.

\begin{prop} \label{prop:gt}
\[
\begin{split}
\Sigma &  = \int_0^\infty\int_\Lambda \big\{v\otimes(v\circ g_t)+(v\circ R)\otimes (v\circ R\circ g_t)\big\}\,d\mu\,dt, \\
E &  = \int_0^\infty\int_\Lambda \big\{v\otimes(v\circ g_t)-(v\circ R)\otimes (v\circ R\circ g_t)\big\}\,d\mu\,dt.
\end{split}
\]
\end{prop}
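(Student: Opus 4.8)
The plan is to manipulate the Green-Kubo formula for $E$ (and simultaneously for $\Sigma$) using the time-reversal identity~\eqref{eq:gt} together with the $R$-invariance of $\mu$. Start from
\[
E = \int_0^\infty\int_\Lambda \big\{v\otimes(v\circ g_t)-(v\circ g_t)\otimes v\big\}\,d\mu\,dt,
\]
and split each integrand into two halves, keeping one copy as is and transforming the other. In the copy to be transformed, push the flow $g_t$ through the time-reversal: since $\mu$ is $R$-invariant, for any integrable $\Phi$ we have $\int_\Lambda \Phi\,d\mu = \int_\Lambda \Phi\circ R\,d\mu$. Applying this with $\Phi = v\otimes(v\circ g_t)$ gives $\int_\Lambda (v\circ R)\otimes(v\circ g_t\circ R)\,d\mu$, and now~\eqref{eq:gt} turns $g_t\circ R$ into $R\circ g_{-t}$, so the term becomes $\int_\Lambda (v\circ R)\otimes(v\circ R\circ g_{-t})\,d\mu$.

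The remaining issue is the sign of the time argument: this manipulation produces $g_{-t}$, whereas the claimed formula has $g_t$. Here I would use the standard stationarity fact that $\int_\Lambda (v\circ g_{-t})\otimes(v\circ R\circ g_{-t}\circ g_{-t}) \dots$ — more precisely, precompose the whole integrand with $g_t$, which leaves $\int_\Lambda\!\cdot\,d\mu$ unchanged by $g_t$-invariance of $\mu$, converting $\int_\Lambda (v\circ R)\otimes(v\circ R\circ g_{-t})\,d\mu$ into $\int_\Lambda (v\circ R\circ g_t)\otimes(v\circ R)\,d\mu$. Transposing (or equivalently swapping the two slots of the tensor product, which corresponds to the $u\otimes v \leftrightarrow v\otimes u$ pairing already present in the symmetric/antisymmetric split) then lines this up as $\int_\Lambda (v\circ R)\otimes(v\circ R\circ g_t)\,d\mu$. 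Substituting back into the two-halves decomposition of $E$ and of $\Sigma$, and collecting terms, yields exactly the asserted formulas: the plus sign survives for $\Sigma$ and the minus sign for $E$, with $v\otimes(v\circ g_t)$ in the first slot and $(v\circ R)\otimes(v\circ R\circ g_t)$ in the second.

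I expect the only real bookkeeping obstacle to be keeping the transpose/slot-swap and the sign of $t$ consistent throughout, since the integrand $v\otimes(v\circ g_t) - (v\circ g_t)\otimes v$ is itself already an antisymmetrization and it is easy to double-count a sign. A clean way to organize this is to first prove the single identity
\[
\int_\Lambda (v\circ g_t)\otimes v\,d\mu = \int_\Lambda (v\circ R)\otimes (v\circ R\circ g_t)\,d\mu
\]
for each fixed $t\ge 0$ (using $R$-invariance of $\mu$, then~\eqref{eq:gt}, then $g_t$-invariance of $\mu$, then a transpose), and then substitute this into both Green-Kubo series termwise, which immediately gives both displayed formulas. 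Convergence of the resulting integrals is inherited from the assumed convergence of the original Green-Kubo formulas, since each step is an equality of integrands after a measure-preserving change of variables.
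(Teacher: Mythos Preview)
Your proposal is correct and follows essentially the same route as the paper: establish the single identity $\int_\Lambda (v\circ g_t)\otimes v\,d\mu = \int_\Lambda (v\circ R)\otimes (v\circ R\circ g_t)\,d\mu$ for each fixed $t$ via measure-preserving changes of variable, and then substitute it into the Green-Kubo formulas for $\Sigma$ and $E$. The paper's version is marginally tidier---it combines the $g_t$- and $R$-invariance into a single substitution $y\mapsto g_{-t}(Ry)$ and then applies~\eqref{eq:gt}, so no separate transpose step is required---but the content is identical, and your ``two halves'' framing is unnecessary once you have the identity.
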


\begin{proof}
By invariance of $\mu$ under the fast flow $g_t$ and $R$,
and~\eqref{eq:gt},
\[
\begin{split}
\int_\Lambda (v\circ g_t)\otimes v\,d\mu & =
\int_\Lambda (v\circ R)\otimes (v\circ g_{-t}\circ R)\,d\mu
\\ & =
\int_\Lambda (v\circ R)\otimes (v\circ R \circ  g_{t})\,d\mu.
\end{split}
\] 
Substituting this into the covariance and L\'evy area formulas from Section~\ref{sec:setup} yields the result.
\end{proof}

\begin{rmk} \label{rmk:E=0}
It can be seen already from Proposition~\ref{prop:gt} that $E=0$ if either $v\circ R=v$ or $v\circ R=-v$. (This was the case in~\cite{ChernovDolgopyat09}.)
Our main result, Theorem~\ref{thm:E0} below, implies that typically $E\neq0$ outside of these cases.
\end{rmk}

For the slow dynamics, time-reversibility means that
\[
a(Sx)+\eps^{-1}b(Sx)v(Ry)=-S\big\{a(x)+\eps^{-1}b(x)v(y)\big\} 
\quad\text{for all $x\in\R^d$, $y\in\Lambda$, $\eps>0$.}
\]
This simplifies to the requirement that
\begin{equation} \label{eq:abv}
a(Sx)=-Sa(x) 
\quad\text{and}\quad
b(Sx)v(Ry)=-b(x)v(y)
\quad\text{for all $x\in\R^d$, $y\in\Lambda$.}
\end{equation}

To avoid pathologies, from now on we suppose that 
$b:\R^d\to\R^{d\times d}$ defines a nonsingular matrix in $\R^{d\times d}$ on a dense subset of $\R^d$
and that $\{v(y):y\in\Lambda\}$ spans $\R^d$ in the sense that
$\{v(y):y\in\Lambda\}$ is not contained in a proper affine subspace of $\R^d$.  
Then the second condition in~\eqref{eq:abv} simplifies further:

\begin{lemma}   \label{lem:A}
Condition~\eqref{eq:abv} holds if and only if there exists $A\in\R^{d\times d}$ with $A^2=I$ such that
\[
a(Sx)  =-Sa(x),
\qquad
b(Sx)  =-Sb(x)A,
\qquad
v(Ry)  =Av(y),
\]
for all $x\in\R^d$, $y\in\Lambda$.
\end{lemma}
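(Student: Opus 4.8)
The plan is to prove the nontrivial (``only if'') implication; the converse is a one-line substitution. Indeed, if $b(Sx)=-Sb(x)A$, $v(Ry)=Av(y)$ and $A^2=I$, then $b(Sx)v(Ry)=-Sb(x)A\,Av(y)=-Sb(x)A^2v(y)=-Sb(x)v(y)$, which is the second identity in~\eqref{eq:abv}; the drift identity $a(Sx)=-Sa(x)$ is common to both statements and needs no argument. So all the content is in showing that~\eqref{eq:abv} produces such an $A$.

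For that direction, first I would fix a point $x_0\in\R^d$ with $b(Sx_0)$ nonsingular: taking $x_0=Sx_0'$, where $x_0'$ is any point at which $b$ is nonsingular (such a point exists by the density hypothesis on $b$), gives $b(Sx_0)=b(S^2x_0')=b(x_0')$ invertible since $S^2=I$. Evaluating the second identity of~\eqref{eq:abv} at $x=x_0$ and solving for $v(Ry)$ then forces $v(Ry)=Av(y)$ for all $y\in\Lambda$, where $A:=-b(Sx_0)^{-1}Sb(x_0)$; this is the third conclusion. Next I would check $A^2=I$: applying $R$ to $v(Ry)=Av(y)$ and using $R\circ R=I$ gives $v(y)=v(R(Ry))=Av(Ry)=A^2v(y)$ for every $y\in\Lambda$, and since $\{v(y):y\in\Lambda\}$ lies in no proper affine subspace its affine span, hence its linear span, is all of $\R^d$, so $A^2=I$. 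Finally, substituting $v(Ry)=Av(y)$ back into the second identity of~\eqref{eq:abv} yields $b(Sx)A\,v(y)=-Sb(x)v(y)$ for all $x\in\R^d$, $y\in\Lambda$; the same spanning argument upgrades this to the matrix identity $b(Sx)A=-Sb(x)$, and right-multiplying by $A$ and using $A^2=I$ gives $b(Sx)=-Sb(x)A$, completing the proof.

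I do not expect a genuine obstacle here: the argument is elementary linear algebra once the two standing genericity hypotheses are used. The points requiring care are (i) the passage from an identity valid for every vector $v(y)$ to an honest matrix identity, which is exactly the role of the hypothesis that $\{v(y):y\in\Lambda\}$ spans $\R^d$ (affinely, hence linearly), and (ii) establishing $A^2=I$ \emph{before} concluding $b(Sx)=-Sb(x)A$, since the relation $b(Sx)A=-Sb(x)$ by itself only yields $b(Sx)=-Sb(x)A^{-1}$.
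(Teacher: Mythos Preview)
Your proof is correct and follows essentially the same strategy as the paper: extract $A$ from the invertibility of $b$ at a point, then use the spanning hypothesis on $\{v(y)\}$ to promote vector identities to matrix identities. The only minor difference is organizational---you fix a single $x_0$ and obtain $A^2=I$ from $R\circ R=I$, whereas the paper defines $A(x)=-b(x)^{-1}Sb(Sx)$ on the whole nonsingular set and deduces constancy and $A^2=I$ simultaneously from $A(x_1)A(x_2)=I$; your route has the small advantage of yielding $b(Sx)A=-Sb(x)$ for every $x$ directly from spanning, without the density/continuity step the paper uses at the end.
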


\begin{proof}
It is immediate that if $a$, $b$ and $v$ satisfy these restrictions for some $A$ with $A^2=I$, then condition~\eqref{eq:abv} holds.

Conversely, suppose that condition~\eqref{eq:abv} holds.
Let $X=\{x\in\R^d:\det b(x)\neq0\}$.
%
Define
\[
A:X\to\R^{d\times d}, \qquad A(x)=-b(x)^{-1}Sb(Sx).
\]
Then
\[
v(y)=A(x_1)v(Ry) \quad\text{and}\quad
v(Ry)=A(x_2)v(y) \quad \text{for all $x_1,x_2\in X$, $y\in\Lambda$}.
\]
Hence $v(y)=A(x_1)A(x_2)v(y)$ for all $y$ and it follows from the spanning assumption on $v$ that
\[
A(x_1)A(x_2)=I \quad\text{for all $x_1,x_2\in X$.}
\]
Taking $x_1=x_2$, we obtain that $A(x)\equiv A$ is constant on $X$ with $A^2=I$.
We immediately obtain that $v\circ R=Av$.  Also $b\circ S=-SbA$ on the dense set $X\subset\R^d$ and hence on the whole of $\R^d$ by continuity of $b$.
%
%
%
%
\end{proof}

Let $\pi^{\pm}:\R^d\to\R^d$ be the projections onto the $\pm1$ eigenspaces of $A$.  Then we can write $v$ in~\eqref{eq:fs} uniquely as $v=v^++v^-$ where $v^\pm=\pi^\pm v$. Note that
$\int_\Lambda v^\pm\,d\mu=0$.

\begin{cor} \label{cor:A}
In the $(\pi^+,\pi^-)$ coordinates, 
\[
\begin{split}
\Sigma= \left(\begin{array}{cc}\Sigma^+ & 0 \\ 0 & \Sigma^-
\end{array}\right), \qquad
E = \left(\begin{array}{cc} 0 & E_0 \\ -E_0^T & 0
\end{array}\right),
\end{split}
\]
where
\[
\Sigma^\pm = 
2\int_0^\infty\int_\Lambda v^\pm\otimes(v^\pm\circ g_t)
\,d\mu\,dt, \qquad
E_0 = 
2\int_0^\infty\int_\Lambda v^+\otimes(v^-\circ g_t)
\,d\mu\,dt.
\]
\end{cor}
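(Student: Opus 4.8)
The plan is to substitute the decomposition $v = v^+ + v^-$ into the formulas of Proposition~\ref{prop:gt} and track how $R$ acts on each piece. First I would record the basic facts: since $v \circ R = Av$ by Lemma~\ref{lem:A}, and $\pi^\pm$ commute with $A$ (they are spectral projections of $A$), we get $v^+ \circ R = Av^+ = v^+$ and $v^- \circ R = Av^- = -v^-$. Thus $v \circ R = v^+ - v^-$. Consequently, for any $t$,
\[
(v \circ R) \otimes (v \circ R \circ g_t) = (v^+ - v^-) \otimes \big((v^+ - v^-)\circ g_t\big),
\]
which expands into four tensor terms, of which the two ``diagonal'' ones ($v^+\otimes(v^+\circ g_t)$ and $v^-\otimes(v^-\circ g_t)$) carry a plus sign and the two ``cross'' ones carry a minus sign.

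Next I would add and subtract this against $v \otimes (v\circ g_t) = (v^+ + v^-)\otimes\big((v^+ + v^-)\circ g_t\big)$, which has the same four terms all with plus signs. In the sum $v\otimes(v\circ g_t) + (v\circ R)\otimes(v\circ R\circ g_t)$ the cross terms cancel and the diagonal terms double, giving $2\big(v^+\otimes(v^+\circ g_t) + v^-\otimes(v^-\circ g_t)\big)$; integrating yields the claimed block-diagonal $\Sigma$ with the stated $\Sigma^\pm$. In the difference $v\otimes(v\circ g_t) - (v\circ R)\otimes(v\circ R\circ g_t)$ the diagonal terms cancel and the cross terms double, giving $2\big(v^+\otimes(v^-\circ g_t) + v^-\otimes(v^+\circ g_t)\big)$. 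Since $v^+$ takes values in the $+1$ eigenspace (the first coordinate block) and $v^-$ in the $-1$ eigenspace (the second block), the term $v^+\otimes(v^-\circ g_t)$ sits in the upper-right block and $v^-\otimes(v^+\circ g_t)$ in the lower-left block, giving the off-diagonal form of $E$ with upper-right entry $E_0 = 2\int_0^\infty\int_\Lambda v^+\otimes(v^-\circ g_t)\,d\mu\,dt$.

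The only remaining point is to check that the lower-left block is indeed $-E_0^T$, which follows either from the already-established skew-symmetry $E^T = -E$, or directly by noting that $E$'s lower-left block is $2\int_0^\infty\int_\Lambda v^-\otimes(v^+\circ g_t)\,d\mu\,dt$ and that, using $g_t$-invariance of $\mu$ to shift by $g_{-t}$, $\int_\Lambda v^-\otimes(v^+\circ g_t)\,d\mu = \int_\Lambda (v^-\circ g_{-t})\otimes v^+\,d\mu = \big(\int_\Lambda v^+\otimes(v^-\circ g_{-t})\,d\mu\big)^T$; one must then reconcile the sign of the time argument, which is where the skew-symmetry of $E$ (equivalently, the identity $\int_\Lambda v^+\otimes(v^-\circ g_t)\,d\mu = -\int_\Lambda v^+\otimes(v^-\circ g_{-t})\,d\mu$ that underlies it) does the bookkeeping cleanly. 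This is the only mildly delicate step; everything else is a direct expansion. I would also remark in passing that $\int_\Lambda v^\pm\,d\mu = \pi^\pm\int_\Lambda v\,d\mu = 0$, so the integrands are genuinely of Green--Kubo type and the given convergence hypothesis applies to each block.
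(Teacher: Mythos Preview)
Your proposal is correct and follows essentially the same route as the paper: substitute $v\circ R=Av=v^+-v^-$ into the formulas of Proposition~\ref{prop:gt}, expand the tensor products so that diagonal terms survive in $\Sigma$ and cross terms survive in $E$, and read off the block structure. The paper's proof is terser---it simply writes down the resulting integrals and says ``the result follows''---whereas you spell out the $-E_0^T$ identification via the already-known skew-symmetry of $E$, which is exactly the implicit reasoning behind that last step.
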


\begin{proof}
Since $v\circ R=Av=v^+-v^-$, we obtain
\[
\begin{split}
\Sigma &  = 2\int_0^\infty\int_\Lambda \big\{v^+\otimes(v^+\circ g_t)+
v^-\otimes(v^-\circ g_t)\big\}\,d\mu\,dt, \\
E &  = 2\int_0^\infty\int_\Lambda \big\{v^+\otimes(v^-\circ g_t)+
v^-\otimes(v^+\circ g_t)\big\}\,d\mu\,dt.
\end{split}
\]
The result follows.
\end{proof}

\section{Generality of $E_0$}
\label{sec:E0}

In this section, we show that there are no further restrictions on the L\'evy area $E$ beyond those in Corollary~\ref{cor:A}.
Recall that the vector field $g$ in~\eqref{eq:fs} is assumed to be $C^r$ for some $r\ge1$.
We suppose that the fast dynamics defined by $g$ is mixing sufficiently quickly, so that
the series for $E$ in Section~\ref{sec:setup} and hence $E_0$ in Corollary~\ref{cor:A} converge for all $C^r$ functions $v:\Lambda\to\R^d$ with $\int_\Lambda v\,d\mu=0$. 
We exclude the uninteresting case where $\Lambda$ is a fixed point. (For then $v|_\Lambda\equiv0$, so $\Sigma=E=0$ and there is no stochasticity in the limit.)

\begin{rmk} \label{rmk:dense}
In the setting of~\cite{CFKMZ22,Fleming22,KM16,KKM22}, the matrices $\Sigma$ and $E$ depend continuously on $v\in C^r$. 
\end{rmk}


We say that a function $f$ defined on $\Lambda$ is \emph{$R$-invariant} if
$f\circ R=f$.

\begin{prop} \label{prop:ex}
For any $R$-invariant  $f\in C^r(\R^m,\R^{d^+})$,
$h\in C^{r+1}(\R^m,\R^{d^-})$ with $\int_\Lambda f\,d\mu=0$,
there exists $v\in C^r(\R^m,\R^d)$ 
with $v\circ R=Av$ and $\int_\Lambda v\,d\mu=0$ such that
$E_0 = \int_\Lambda f\otimes h\,d\mu$.
\end{prop}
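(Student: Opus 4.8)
The plan is to construct $v$ explicitly rather than argue abstractly. Write $v=v^++v^-$ with $v^\pm$ valued in the $\pm1$-eigenspaces of $A$; then $v\circ R=Av$ is equivalent to the pair $v^+\circ R=v^+$, $v^-\circ R=-v^-$, and by Corollary~\ref{cor:A} one has $E_0=2\int_0^\infty\int_\Lambda v^+\otimes(v^-\circ g_t)\,d\mu\,dt$. I would take $v^+=f$ outright: it is $R$-invariant, $\R^{d^+}$-valued, $C^r$ and of mean zero, hence an admissible $v^+$. All remaining work is to choose $v^-$, and the key idea is to take it to be a derivative of $h$ along the fast flow, so that the Green-Kubo integral telescopes. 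Before that, one harmless reduction: since $f$ and $\mu$ are $R$-invariant, $\int_\Lambda f\otimes h\,d\mu=\int_\Lambda f\otimes\bar h\,d\mu$ where $\bar h=\tfrac12(h+h\circ R)$ is $R$-invariant and again $C^{r+1}$, so we may assume from the outset that $h\circ R=h$.

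Now set $v^-=-\tfrac12\,\mathcal{L}h$, where $\mathcal{L}h(y)=\frac{d}{dt}\big|_{t=0}h(g_t y)$ is the derivative of $h$ along the fast flow; this is $\R^{d^-}$-valued and $C^r$, the hypothesis $h\in C^{r+1}$ (with $g\in C^r$) being exactly what is needed here. Moreover $\int_\Lambda v^-\,d\mu=-\tfrac12\frac{d}{dt}\big|_{t=0}\int_\Lambda h\circ g_t\,d\mu=0$ by $g_t$-invariance of $\mu$. Using the time-reversal identity $g_t\circ R=R\circ g_{-t}$ from~\eqref{eq:gt} together with $h\circ R=h$, a short computation gives $\mathcal{L}h\circ R=-\mathcal{L}h$ (the minus sign appears because one ends up differentiating in $-t$), hence $v^-\circ R=-v^-$. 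Thus $v=f-\tfrac12\,\mathcal{L}h$ lies in $C^r$, satisfies $v\circ R=Av$, and has $\int_\Lambda v\,d\mu=0$. (Strictly, $R$ is only given on $\Lambda$, so this constructs $v$ on $\Lambda$; one then takes any $C^r$ extension to $\R^m$, which is all that~\eqref{eq:fs} and the Green-Kubo formulas depend on.)

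It then remains to evaluate $E_0$. Since $\mathcal{L}h\circ g_t=\frac{d}{dt}(h\circ g_t)$, interchanging $\frac{d}{dt}$ with $\int_\Lambda$ (legitimate as $\Lambda$ is compact and everything is smooth) gives, for every $T>0$, $\int_0^T\int_\Lambda f\otimes(\mathcal{L}h\circ g_t)\,d\mu\,dt=\int_\Lambda f\otimes(h\circ g_T)\,d\mu-\int_\Lambda f\otimes h\,d\mu$. By mixing together with $\int_\Lambda f\,d\mu=0$, the first term on the right tends to $0$ as $T\to\infty$. Hence $E_0=2\int_0^\infty\int_\Lambda f\otimes(v^-\circ g_t)\,d\mu\,dt=-\int_0^\infty\int_\Lambda f\otimes(\mathcal{L}h\circ g_t)\,d\mu\,dt=\int_\Lambda f\otimes h\,d\mu$, as required.

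I do not expect a genuine obstacle: the argument is a short explicit construction. The one point needing care is the sign bookkeeping behind $v^-\circ R=-v^-$, which works precisely because $R$-invariance of $h$ cancels the sign produced by the time reversal $g_t\mapsto g_{-t}$ --- this is also why $h$ is symmetrised first. A secondary, purely technical matter is the passage from $\Lambda$ to $\R^m$ mentioned above.
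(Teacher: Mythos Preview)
Your proof is correct and follows essentially the same approach as the paper: set $v^+$ equal to (a multiple of) $f$ and take $v^-$ to be the Lie derivative of $h$ along the fast flow so that the Green--Kubo time integral telescopes and mixing kills the boundary term. The only cosmetic differences are that you absorb the factor of $2$ into the definition $v^-=-\tfrac12\mathcal{L}h$ (the paper leaves it and remarks that it can be absorbed into $f$), and you first symmetrise $h$ to make it $R$-invariant --- the paper's chain-rule verification of $v^-\circ R=-v^-$ in fact tacitly uses $h\circ R=h$ as well, consistent with how $h$ is chosen in the subsequent theorem.
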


\begin{proof}
Recall that $g_t$ denotes the fast flow generated by the vector field $g$ in~\eqref{eq:fs}.
We define $v=v^++v^-:\Lambda\to\R^d$ by
setting 
\[
v^+  =-f, 
\qquad v^-=\nabla h\cdot g=\sum_j \frac{\partial h}{\partial y_j}g_j.
\]

Clearly, $v^+\circ R=v^+=Av^+$.
By the chain rule, $(\nabla h)_{Ry}=(\nabla h)_yR$. 
This combined with the identity
$g\circ R=-Rg$ ensures that $v^-\circ R=-v^-=Av^-$.
Hence $v\circ R=Av$.
Also, $\int_\Lambda v^- \,d\mu=
\int_\Lambda v^-\circ R \,d\mu=
-\int_\Lambda v^- \,d\mu$. Hence 
$\int_\Lambda v^- \,d\mu=0$. 
But $\int_\Lambda v^+ \,d\mu=0$ by construction, so 
$\int_\Lambda v \,d\mu=0$.

Along solutions $y(t)$ to $\dot y=g(y)$, 
\[
v^-(y(t))=
\sum_j \frac{\partial h}{\partial y_j}(y(t))\, \dot y_j(t)
=\frac{d}{dt} h(y(t)).
\]
Hence
\(
\int_0^T v^-(y(t))\,dt= h(y(T))-h(y(0)).
\)
In other words,
\[
\int_0^T v^-\circ g_t\,dt = h\circ g_T-h.
\]
Since the flow is mixing and $\int_\Lambda v^+\,d\mu=0$,
\[
\begin{split}
\int_0^T \int_\Lambda  v^+ \otimes (v^-\circ g_t)\,d\mu\,dt & =
 \int_\Lambda v^+ \otimes \Big(h\circ g_T-h\Big)\,d\mu \\
& \to -\int_\Lambda v^+\otimes h\,d\mu
= \int_\Lambda f\otimes h\,d\mu
\end{split}
\]
as $T\to\infty$.
By the definition of $E_0$ in Corollary~\ref{cor:A} (up to a factor of $2$ which can be incorporated into $f$) we have proved the result.
\end{proof}

\begin{thm} \label{thm:E0}
For any $d^+\times d^-$ matrix $F$, there exists a $C^r$ function
$v:\R^m\to\R^d$ with $v\circ R=Av$ and $\int_\Lambda v\,d\mu=0$ such that $E_0=F$.
\end{thm}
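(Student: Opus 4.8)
The strategy is to reduce Theorem~\ref{thm:E0} to Proposition~\ref{prop:ex}. Proposition~\ref{prop:ex} already produces, for any $R$-invariant $f$ and any $h$ with the stated regularity and $\int_\Lambda f\,d\mu=0$, a valid $v$ with $v\circ R=Av$, $\int_\Lambda v\,d\mu=0$, and $E_0=\int_\Lambda f\otimes h\,d\mu$. So it suffices to choose, for a prescribed $d^+\times d^-$ matrix $F$, a pair $(f,h)$ for which $\int_\Lambda f\otimes h\,d\mu=F$. Writing $f=(f_1,\dots,f_{d^+})$ and $h=(h_1,\dots,h_{d^-})$ componentwise, the requirement becomes $\int_\Lambda f_i h_j\,d\mu = F^{ij}$ for all $i,j$ — i.e. we must realise a prescribed matrix of correlations.

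First I would reduce to the rank-one case: it is enough to realise any matrix of the form $F=c\,e\otimes e'$ with $e\in\R^{d^+}$, $e'\in\R^{d^-}$, since a general $F$ is a finite sum of such matrices, and $E_0$ is additive in $v$ in an appropriate sense — more precisely, if $(f^{(k)},h^{(k)})$ realise $F_k$ via functions $v^{(k)}$ with disjoint "frequency content" then $v=\sum_k v^{(k)}$ realises $\sum_k F_k$, provided the cross terms $\int_\Lambda f^{(k)}\otimes h^{(l)}\,d\mu$ vanish for $k\neq l$. This orthogonality can be arranged by building the scalar functions out of a single scalar observable: fix any nonconstant $C^r$ function $\phi:\Lambda\to\R$ that is $R$-invariant (such $\phi$ exists since $\Lambda$ is not a fixed point and $R^2=I$, so one may symmetrise any nonconstant $C^r$ function over the $\mathbb Z/2$ action, and a nonconstant symmetrisation can be found), normalise so that $\int_\Lambda\phi\,d\mu=0$, and take $\psi=\phi^2-\int_\Lambda\phi^2\,d\mu$, which is again $R$-invariant with zero mean; then $\int_\Lambda\phi\psi\,d\mu = \int_\Lambda\phi^3\,d\mu$, and after a further adjustment one can ensure $\int_\Lambda\phi\psi\,d\mu\neq 0$. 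For the rank-one target $c\,e\otimes e'$, set $f = \lambda\,\phi\,e$ and $h = \psi\,e'$ with $\lambda$ chosen so that $\lambda\int_\Lambda\phi\psi\,d\mu = c$; note $f$ is $R$-invariant, $h\in C^{r+1}$ if $\phi$ is chosen in $C^{r+2}$ (the regularity of $\psi$ matches that of $\phi$, and one needs $h\in C^{r+1}$, so pick $\phi\in C^{\max(r,r+1)}=C^{r+1}$ and, for $h$, $\phi\in C^{r+1}$; since $r\ge1$ and $g\in C^r$ only requires $v\in C^r$, this is consistent after checking $v^-=\nabla h\cdot g\in C^r$ needs $h\in C^{r+1}$, i.e. $\psi\in C^{r+1}$, i.e. $\phi\in C^{r+1}$), and $\int_\Lambda f\,d\mu=0$. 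Applying Proposition~\ref{prop:ex} to this $(f,h)$ gives $v$ with $E_0 = c\,e\otimes e'$.

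For general $F=\sum_{k=1}^N c_k\,e_k\otimes e_k'$, I would instead use $N$ independent scalar observables: pick $C^{r+1}$ functions $\phi_1,\dots,\phi_N$ on $\Lambda$, $R$-invariant with zero mean, and auxiliary $\psi_1,\dots,\psi_N$ (again $R$-invariant, zero mean) such that the "diagonal correlations" $\int_\Lambda\phi_k\psi_k\,d\mu$ are nonzero while all "off-diagonal correlations" $\int_\Lambda\phi_k\psi_l\,d\mu$ vanish for $k\neq l$; this is a genericity statement — for instance take $\phi_k$ to have disjoint supports, or to be eigenfunctions-like building blocks. Then set $f=\sum_k\lambda_k\phi_k e_k$, $h=\sum_k\psi_k e_k'$ with $\lambda_k\int_\Lambda\phi_k\psi_k\,d\mu=c_k$; one computes $\int_\Lambda f\otimes h\,d\mu = \sum_k\lambda_k\big(\int_\Lambda\phi_k\psi_k\,d\mu\big)\,e_k\otimes e_k' = F$, and Proposition~\ref{prop:ex} finishes the job.

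The main obstacle I anticipate is the construction of the scalar observables with prescribed correlation structure while respecting $R$-invariance and the mean-zero and regularity constraints simultaneously — in particular, ensuring a nonconstant $R$-invariant $\phi$ exists (this is where the exclusion of the fixed-point case and $R^2=I$ are used) and that the diagonal correlation $\int_\Lambda\phi\psi\,d\mu$ can be made nonzero. A clean way around the off-diagonal vanishing is to use disjointly supported bumps: choose $N$ disjoint open sets $U_k\subset\Lambda$ each invariant under $R$ (possible by taking $U_k\cup RU_k$ with $U_k$ small enough to be disjoint from its $R$-image, or genuinely $R$-invariant neighbourhoods of $R$-fixed points if those exist, but the safer route is $U_k$ together with $RU_k$), and supported $R$-invariant bump functions $\phi_k,\psi_k$ on $U_k$; then off-diagonal integrals vanish automatically, the diagonal ones are visibly arrangeable to be nonzero, and the mean-zero condition is imposed by subtracting the (small) mean, which does not affect the correlations with the zero-mean partner up to an easily-absorbed correction. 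Once this combinatorial/analytic bookkeeping is done, the theorem follows immediately from Proposition~\ref{prop:ex} by linearity.
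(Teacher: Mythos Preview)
Your approach is correct in outline and, once the bookkeeping is tidied, gives a valid proof; it does, however, proceed differently from the paper. The paper does not try to hit a prescribed $F$ directly: it first shows that $E_0$ can be made to have full rank, and then observes that the substitution $v\mapsto(L^+v^+,L^-v^-)$ sends $E_0$ to $L^+E_0(L^-)^T$, which by elementary linear algebra hits every $d^+\times d^-$ matrix once $E_0$ has full rank. To obtain a full-rank $E_0$ it works in $L^2$, choosing orthonormal $R$-invariant zero-mean $\phi_1,\dots,\phi_{d^-}$ and setting $f_1=\sum_i\alpha_i\phi_i$, $h_1=\sum_j\beta_j\phi_j$, so that $\int_\Lambda f_1\otimes h_1\,d\mu=\sum_j\alpha_j\otimes\beta_j$ is arbitrary; it then approximates the $\phi_j$ by smooth $R$-invariant functions, which perturbs this matrix but preserves full rank, and applies Proposition~\ref{prop:ex}. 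Your route---decompose $F$ into rank-one pieces and realise each via disjointly supported $R$-invariant bumps, then invoke Proposition~\ref{prop:ex}---shares the same skeleton (independent scalar building blocks so that $\int_\Lambda f\otimes h\,d\mu$ becomes a free combination) but trades $L^2$-orthonormality for disjoint supports; this is arguably more direct, since working with smooth bumps from the start avoids both the $L^2$-to-$C^\infty$ approximation and the detour through the full-rank case plus the $(L^+,L^-)$ transformation. One clean-up: rather than subtracting means from the bumps after the fact (which, as you note, spoils the off-diagonal vanishing), build each $\phi_k$ as a difference of two positive bumps inside its $R$-invariant set so that $\int_\Lambda\phi_k\,d\mu=0$ by construction, and take $\psi_k=\phi_k$; then $\int_\Lambda\phi_k\psi_k\,d\mu>0$ and all cross terms vanish automatically.
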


\begin{proof}
We claim that it is possible to choose $v$ so that $E_0$ has full rank, namely
$\min\{d_+,d_-\}$. 
Assuming this is the case, let $L^\pm\in \R^{d^\pm\times d^\pm}$.
By the definition of $E_0$ in Corollary~\ref{cor:A},
transforming $v$ to $(L^+v^+,L^-v^-)$ changes $E_0$ to $L^+E_0(L^-)^T$.
By standard linear algebra, this results in any desired matrix in $\R^{d^+\times d^-}$.

It remains to prove the claim. The first step is to construct suitable $R$-invariant functions $f_1\in L^2(\Lambda,\R^{d^+})$ and $h_1\in L^2(\Lambda,\R^{d^-})$ with $\int_\Lambda f_1\,d\mu=0$ such 
that $\int_\Lambda f_1\otimes h_1\,d\mu$ has full rank.
The second step is to approximate $f_1$ and $h_1$ by smooth $R$-invariant functions $f:\R^m\to \R^{d^+}$ and $h:\R^m\to \R^{d^-}$ with
$\int_\Lambda f\,d\mu=0$ so that
$\int_\Lambda f\otimes h\,d\mu$ has full rank.
The third step is to apply Proposition~\ref{prop:ex}.

\vspace{1ex}
\noindent{\bf Step 1}
Since $\Lambda$ is not a fixed point, we can choose infinitely many orthonormal $R$-invariant functions
$\phi_j\in L^2(\Lambda)$
with $\int_\Lambda \phi_j\,d\mu=0$ and $\int_\Lambda\phi_i\phi_j\,d\mu=\delta_{ij}$.
Let $f_1=\sum_{i=1}^{d^-} \alpha_i\phi_i$ and 
$h_1=\sum_{j=1}^{d^-} \beta_j\phi_j$ where
$\alpha_i\in \R^{d^+}$, 
$\beta_j\in \R^{d^-}$. 
Then 
\[
\int_\Lambda f_1\otimes h_1\,d\mu
=\sum_{i,j}(\alpha_i\otimes \beta_j)\int_\Lambda \phi_i\phi_j\,d\mu
=\sum_{j=1}^{d^-}\alpha_j\otimes \beta_j.
\]
Let $F_1=\int_\Lambda f_1\otimes h_1\,d\mu$.
Taking $\beta_j$ to be the $j$'th canonical unit vector, 
$F_1$ is the $d^+\times d^-$ matrix with
columns $\alpha_1,\dots,\alpha_{d^-}$.  
In particular, $F_1$ is arbitrary and we can choose the $\alpha_i$ so that $F_1$ is a matrix of full rank.

\vspace{1ex}
\noindent{\bf Step 2}
Now choose $R$-invariant functions $\tilde\phi_j\in C^\infty(\R^m)$ with
$\int_\Lambda \tilde\phi_j\,d\mu=0$ and 
$\int_\Lambda|\phi_j-\tilde\phi_j|^2\,d\mu$ small\footnote{For instance, to approximate $\phi_1$ by a $C^\infty$ function, first
approximate $\phi_1$ in $L^2$ by a simple function $\sum_{k=1}^\ell c_k1_{E_k}$
(with $c_k\in\R$ and $E_k\subset\Lambda$ measurable). By outer regularity of the Borel probability measure $\mu$, there exist open neighbourhoods $U_k\subset\R^m$ of $E_k$ with $\mu(U_k\setminus E_k)$ small.
Hence $\sum_{k=1}^\ell c_k1_{U_k}$ is $L^2$-close to $\phi_1$.
Choose $V_k\subset\R^m$ closed such that $\overline{U_k}\subset\Int V_k$ with $\mu(V_k-U_k)$ small. By Urysohn's Lemma, there exists a continuous function $\psi_k:\R^m\to[0,1]$ supported on $V_k$ with $\psi_k|_{U_k}{\equiv1}$. 
In this way we obtain a continuous function $\sum_{k=1}^\ell c_k\psi_k$ that is $L^2$-close to $\phi_1$. 
Finally, each $\psi_k$ can be uniformly approximated by a $C^\infty$ function $\zeta_k$ resulting in a $C^\infty$ function $\tilde\phi_1=\sum_{k=1}^\ell c_k\zeta_k$ that is $L^2$-close to $\phi_1$.}
 and
define $f$, $h$ using $\tilde\phi_j$ in place of $\phi_j$ with $\alpha_i$, $\beta_j$ unchanged. 
This results in a matrix $F=\int_\Lambda f\otimes h\,d\mu$ close to $F_1$. In particular, taking the approximation close enough ensures that $F$ is still of full rank.

\vspace{1ex}
\noindent{\bf Step 3}
It follows from Proposition~\ref{prop:ex} that we can choose $v\in C^r(\R^m,\R^d)$ with $v\circ R=Av$ and $\int_\Lambda v\,d\mu=0$ so that $E_0=\int_\Lambda f\otimes h\,d\mu$. In particular, such $E_0$ has full rank, proving the claim.
\end{proof}

\section{Local surjectivity for perturbations of $E_0$}
\label{sec:surj}

Let $\chi:C^r(\R^m,\R^d)\to \R^{d^+\times d^-}$ be the mapping defining
$E_0=\chi(v)$ in Corollary~\ref{cor:A}.

In Section~\ref{sec:E0}, we showed that $E_0$ was a general matrix in the sense that $\chi$ is surjective.
As mentioned in Remark~\ref{rmk:dense}, $\chi$ is continuous in reasonable situations. In this section, we complete the picture by establishing a local surjectivity result for $\chi$; namely that if $\chi(v_0)=E_0$ and $E$ is close to $E_0$ of full rank, then there exists $v_1$ close to $v_0$ with $\chi(v_1)=E$.

The first step is to show that if $E_0=\chi(v_0)$ does not have full rank, then the rank can be increased under small perturbations.

\begin{lemma} \label{lem:surj}
Suppose that $E_0=\chi(v_0)$.
Then there exists $v$ arbitrarily $C^r$-close to $v_0$
such that $\rank\chi(v)=\min\{d^+,d^-\}$.
\end{lemma}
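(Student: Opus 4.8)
The plan is to show that the set of $v$ with non-maximal rank $\chi(v)$ is, in a suitable sense, small enough that we can always perturb out of it. The cleanest route exploits the linearity structure behind $\chi$ revealed in Corollary~\ref{cor:A} and Step~2 of the proof of Theorem~\ref{thm:E0}. Namely, recall that $\chi(v)=E_0$ depends on $v$ only through its components $v^+=\pi^+v$ and $v^-=\pi^-v$, that $v^\mp$ must satisfy $v^\pm\circ R=\pm v^\pm$, and that $E_0$ is bilinear in $(v^+,v^-)$: writing $E_0(v^+,v^-)=2\int_0^\infty\int_\Lambda v^+\otimes(v^-\circ g_t)\,d\mu\,dt$, we have $\chi(v^++v^-)=E_0(v^+,v^-)$, linear separately in each slot.

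First I would reduce to a one-parameter perturbation. Suppose $\rank E_0(v_0^+,v_0^-)=k<\min\{d^+,d^-\}$. By Theorem~\ref{thm:E0} (its proof, Steps 1--3), choose a fixed $w=w^++w^-\in C^r$ with $w\circ R=Aw$, $\int_\Lambda w\,d\mu=0$, and $E_0(w^+,w^-)$ of full rank $\min\{d^+,d^-\}$. For $s\in\R$ set $v_s=v_0+s\,w$; this is admissible (the constraints $v_s\circ R=Av_s$ and $\int v_s\,d\mu=0$ are preserved, being linear), and $v_s\to v_0$ in $C^r$ as $s\to0$. By bilinearity,
\[
\chi(v_s)=E_0(v_0^++sw^+,\,v_0^-+sw^-)=E_0(v_0^+,v_0^-)+s\big(E_0(v_0^+,w^-)+E_0(w^+,v_0^-)\big)+s^2E_0(w^+,w^-).
\]
Thus $s\mapsto\chi(v_s)$ is a polynomial curve of matrices, and I want to show it is not identically of rank $\le k$. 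Since $E_0(w^+,w^-)$ has full rank, for all but finitely many values of $s^{-1}$ the matrix $s^{-2}\chi(v_s)=E_0(w^+,w^-)+s^{-1}(\cdots)+s^{-2}(\cdots)$ has full rank (the determinant of every maximal minor is a polynomial in $s^{-1}$, not identically zero because its value at $s^{-1}=0$ is the corresponding minor of the full-rank matrix $E_0(w^+,w^-)$). Hence $\chi(v_s)$ has full rank for all sufficiently large $s$, but I need it for $s$ arbitrarily small.

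The fix is to replace $w$ by a small multiple of itself in a way that also shrinks the target: instead of a single $w$, scale appropriately. Concretely, for any $\delta>0$ the function $v_0+\delta w$ is $C^r$-close to $v_0$, and the argument above shows $\chi(v_0+t w)$ has full rank for all $t$ outside a finite exceptional set $S\subset\R$; choosing $t\in(0,\delta)\setminus S$ (nonempty since $S$ is finite) yields $v=v_0+tw$ with $\|v-v_0\|_{C^r}=t\|w\|_{C^r}<\delta\|w\|_{C^r}$ and $\rank\chi(v)=\min\{d^+,d^-\}$. This proves the lemma. The main obstacle is the verification that the minor-determinant polynomials in the relevant variable are not identically zero; this is exactly where the full-rank $w$ produced by Theorem~\ref{thm:E0} is used, and it is the only place the mixing hypothesis (via convergence of the Green--Kubo series and continuity of $\chi$) enters.
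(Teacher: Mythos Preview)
Your argument is correct and is essentially the paper's own proof: perturb $v_0$ along the line $v_0+tw$ where $\chi(w)$ has full rank (supplied by Theorem~\ref{thm:E0}), note that a maximal minor of $\chi(v_0+tw)$ is a polynomial in $t$ whose top-degree coefficient is the corresponding minor of $\chi(w)$, hence is not identically zero and vanishes only finitely often, so arbitrarily small $t$ works. One small slip: not \emph{every} maximal minor of a full-rank rectangular matrix is nonzero, only some---but one is all you need, and the paper makes this explicit by choosing $\chi(w)=(I_{d^+}\mid 0)$ and tracking the single leading $d^+\times d^+$ block.
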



\begin{proof}
Suppose without loss of generality that $d^+\le d^-$.
By Theorem~\ref{thm:E0}, we can choose $v\in C^r(\R^m,\R^d)$ such that 
$\chi(v)=E$ 
where $E=\left(\begin{array}{c|c} I_{d^+} & 0  \end{array} \right)$.
Let $v_t=v_0+tv$ and define $E_t=\chi(v_t)$.
Then
$E_t=\left(\begin{array}{c|c} A_t & B_t  \end{array} \right)$
where $A_t=A_0+tA_1+t^2 I_{d^+}$ for some
matrices $A_0,A_1\in\R^{d^+\times d^+}$, 
$B_t\in\R^{d^+\times d^-}$.
Define the polynomial $p(t)=\det A_t$ of degree $2d^+$.
Note that $p(t)=t^{2d^+}(1+O(t^{-1}))$ so $p(t)\neq0$ for large $t$.
Since $p$ is a polynomial, it follows that there exists $\eps>0$ such that
$p(t)\neq0$ for all $t\in(0,\eps)$.
Hence $A_t$ is invertible, and so $\rank E_t=d^+$, for all $t\in(0,\eps)$.
\end{proof}

Next, we require a basic result from linear algebra.
\begin{prop} \label{prop:LA}
Let $E_0,E\in \R^{m\times n}$ be matrices of full rank and suppose that $E$ is close to $E_0$.
Then there exist near identity matrices 
$P\in\R^{m\times m}$,
$Q\in\R^{n\times n}$ such that
$PE_0Q^T=E$. 
\end{prop}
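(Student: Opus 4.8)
The plan is to reduce the statement to the case where $E_0$ is already in a normal form, namely $E_0 = \left(\begin{array}{c|c} I_r & 0 \\ \hline 0 & 0\end{array}\right)$ after left and right multiplication by fixed invertible matrices. Since $E_0$ has full rank $r=\min\{m,n\}$, assume without loss of generality that $m\le n$ (otherwise transpose), so $r=m$ and $E_0$ has linearly independent rows. First I would observe that it suffices to prove the result when $E_0$ is replaced by any fixed matrix of full rank to which it is related by \emph{fixed} invertible row/column operations: if $M E_0 N^T = E_0'$ with $M,N$ invertible, and we can write $P' E_0' Q'^T = M E N^T$ with $P',Q'$ near the identity, then $P = M^{-1}P'M$ and $Q = N^{-1}Q'N$ are near the identity (continuity of matrix inversion and multiplication) and satisfy $PE_0Q^T = E$. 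So I may assume $E_0 = \left(\begin{array}{c|c} I_m & 0\end{array}\right)$, the $m\times n$ matrix whose first $m$ columns form the identity block.

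With $E_0$ in this form, write $E$ close to $E_0$ in block form as $E = \left(\begin{array}{c|c} E_1 & E_2\end{array}\right)$ where $E_1\in\R^{m\times m}$ is close to $I_m$ and $E_2\in\R^{m\times(n-m)}$ is close to $0$. Since $E_1$ is near the identity, it is invertible and $E_1$ itself is a near-identity matrix; set $P = E_1$. Then $P E_0 = \left(\begin{array}{c|c} E_1 & 0\end{array}\right)$, so it remains to find a near-identity $Q\in\R^{n\times n}$ with $P E_0 Q^T = E$, i.e.
\[
\left(\begin{array}{c|c} E_1 & 0\end{array}\right) Q^T = \left(\begin{array}{c|c} E_1 & E_2\end{array}\right).
\]
Taking $Q^T = \left(\begin{array}{c|c} I_m & E_1^{-1}E_2 \\ \hline 0 & I_{n-m}\end{array}\right)$ works: the product is $\left(\begin{array}{c|c} E_1 & E_2\end{array}\right) = E$. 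Since $E_1^{-1}E_2$ is small (it is a product of the bounded matrix $E_1^{-1}$, close to $I_m$, with the small matrix $E_2$), this $Q$ is near the identity. This proves the statement in the case $m\le n$.

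For the case $m > n$, I would simply transpose: $E_0^T, E^T\in\R^{n\times m}$ have full rank $n$ with $E^T$ close to $E_0^T$, so by the case just proved there exist near-identity $\tilde P\in\R^{n\times n}$, $\tilde Q\in\R^{m\times m}$ with $\tilde P E_0^T \tilde Q^T = E^T$; transposing back gives $\tilde Q E_0 \tilde P^T = E$, so $P = \tilde Q$ and $Q = \tilde P$ work. I expect no real obstacle here; the only point requiring a little care is quantifying ``near identity'' and ``close'' so that the bounds propagate through the reduction to normal form (the first paragraph) and through the inversion $E_1\mapsto E_1^{-1}$ — but these are standard continuity facts about matrix operations on the open dense set of invertible matrices, so the argument is genuinely elementary.
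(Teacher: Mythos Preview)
Your proof is correct and follows essentially the same route as the paper: reduce $E_0$ to the normal form $(I_m\mid 0)$ by fixed invertible matrices, find near-identity $P',Q'$ relating this normal form to the correspondingly transformed $E$, and conjugate back. The only cosmetic difference is that you give explicit block formulas for $P'$ and $Q'^T$, whereas the paper simply invokes ``near identity row and column operations.''
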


\begin{proof}
Suppose without loss of generality that $m\le n$.
Since $\rank E_0=m$, 
there exist invertible matrices 
$P_0\in\R^{m\times m}$,
$Q_0\in\R^{n\times n}$ such that
$P_0E_0Q_0^T=\left(\begin{array}{c|c} I_{m} & 0  \end{array} \right)$.
Then
$P_0EQ_0^T$ is close to $\left(\begin{array}{c|c} I_{m} & 0  \end{array} \right)$
and it is easily seen that there exist near identity matrices
$P_1\in\R^{m\times m}$,
$Q_1\in\R^{n\times n}$ corresponding to near identity row and column operations such that 
$(P_1P_0)E(Q_1Q_0)^T=\left(\begin{array}{c|c} I_{m} & 0  \end{array} \right)$.
Moreover,
$(P_0^{-1}P_1P_0)E(Q_0^{-1}Q_1Q_0)^T=E_0$.
Hence the result holds with 
$P=P_0^{-1}P_1^{-1}P_0$,
$Q=Q_0^{-1}Q_1^{-1}Q_0$.
\end{proof}

We can now state and prove the main result of this section.

\begin{thm} \label{thm:surj}
Suppose that $E_0=\chi(v_0)$ and that $E$ is of full rank and close to $E_0$.
Then there exists $v$ that is $C^r$-close to $v_0$ such that $\chi(v)=E$.
\end{thm}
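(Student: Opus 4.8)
The plan is to combine the rank-increasing perturbation from Lemma~\ref{lem:surj} with the linear-algebra normal form from Proposition~\ref{prop:LA}, exploiting the equivariance of $\chi$ already used in the proof of Theorem~\ref{thm:E0}: if $v=v^++v^-$ and $L^\pm\in\R^{d^\pm\times d^\pm}$, then replacing $v$ by $L^+v^++L^-v^-$ replaces $\chi(v)$ by $L^+\chi(v)(L^-)^T$, which is immediate from the formula for $E_0$ in Corollary~\ref{cor:A} together with $(L^+u)\otimes(L^-w)=L^+(u\otimes w)(L^-)^T$. I may assume $d^+,d^-\ge1$, the remaining case being vacuous.

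First I would reduce to the case where $\chi$ already has full rank at the base point. Given a target $C^r$-neighbourhood of $v_0$, I apply Lemma~\ref{lem:surj} to produce $\tilde v_0$ in (say) half of that neighbourhood with $\tilde E_0:=\chi(\tilde v_0)$ of full rank $\min\{d^+,d^-\}$; by continuity of $\chi$ (Remark~\ref{rmk:dense}), taking $\tilde v_0$ close enough to $v_0$ forces $\tilde E_0$ to be as close to $E_0=\chi(v_0)$ as desired, hence close to $E$. Thus $\tilde E_0$ and $E$ are full-rank matrices lying close together, and Proposition~\ref{prop:LA} supplies near-identity matrices $P\in\R^{d^+\times d^+}$, $Q\in\R^{d^-\times d^-}$ with $P\tilde E_0Q^T=E$.

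Next I set $v=P\tilde v_0^++Q\tilde v_0^-$, where $\tilde v_0^\pm=\pi^\pm\tilde v_0$. Since $P$ and $Q$ act within the $\pm1$ eigenspaces of $A$, the $\pm$ components of $v$ are exactly $P\tilde v_0^+$ and $Q\tilde v_0^-$, so $v\circ R=Av$ and $\int_\Lambda v\,d\mu=P\int_\Lambda\tilde v_0^+\,d\mu+Q\int_\Lambda\tilde v_0^-\,d\mu=0$; by the equivariance above, $\chi(v)=P\tilde E_0Q^T=E$. Finally, the assignment $(P,Q)\mapsto P\tilde v_0^++Q\tilde v_0^-$ is linear and carries $(I_{d^+},I_{d^-})$ to $\tilde v_0$, so when $P,Q$ are near the identity, $v$ is $C^r$-close to $\tilde v_0$ and hence to $v_0$.

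The one thing requiring care is the bookkeeping of the three successive closeness estimates --- the perturbation $v_0\mapsto\tilde v_0$, the dependence $E\mapsto(P,Q)$ in Proposition~\ref{prop:LA}, and the linear dependence $(P,Q)\mapsto v$ --- arranged so that the total perturbation of $v_0$ stays within the prescribed neighbourhood; this is exactly where continuity of $\chi$ from Remark~\ref{rmk:dense} enters. I do not expect a genuine analytic obstacle: once the equivariance of $\chi$ is in hand, the problem reduces to the finite-dimensional Proposition~\ref{prop:LA} together with the rank correction and smoothing already carried out in Theorem~\ref{thm:E0} and Lemma~\ref{lem:surj}.
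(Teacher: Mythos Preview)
Your argument is correct and follows essentially the same route as the paper: perturb $v_0$ via Lemma~\ref{lem:surj} to make $\chi$ full rank at the base point, then apply Proposition~\ref{prop:LA} and the equivariance $\chi((P\oplus Q)v)=P\chi(v)Q^T$ to hit $E$. You are simply more explicit than the paper about the closeness bookkeeping and the appeal to continuity of $\chi$ (Remark~\ref{rmk:dense}) needed to ensure $\tilde E_0$ remains close to $E$ after the rank-increasing perturbation.
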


\begin{proof}
Suppose without loss of generality that $d^+\le d^-$.
By Lemma~\ref{lem:surj}, we can suppose that $\rank E_0=d^+$.
By Proposition~\ref{prop:LA}, there exist near identity matrices 
$P\in\R^{d^+\times d^+}$,
$Q\in\R^{d^-\times d^-}$ such that
$PE_0Q^T=E$. Hence we can take
$v=(P\oplus Q)v_0$.~
\end{proof}

\section{Corrections to limiting stochastic integrals}
\label{sec:a}
In the previous sections, we gave a complete description of the L\'evy area, namely the skew symmetric matrix $E$ whose entries determine the correction
\begin{equation} \label{eq:correct}
\tfrac12\sum_{\alpha,\beta,\gamma=1}^d E^{\gamma\beta}\partial_\alpha b^\beta(X)b^{\alpha\gamma}(X)
\end{equation}
 to the drift term $a(X)$ as given in~\eqref{eq:a}.
This is not the complete story since even when $E$ is nonzero, it might be the case that the correction is forced to vanish due to the structure of $b$. Indeed, this happens when $b:\R^d\to\R^{d\times d}$ satisfies an exactness condition, namely that $b^{-1}=dh$ for some $h:\R^d\to\R^d$, see for example~\cite{GM13b}. 

In general, $b$ satisfies the time-reversibility constraint
\begin{equation} \label{eq:b}
b(Sx)=-Sb(x)A,
\end{equation} 
from Lemma~\ref{lem:A} which places restrictions on the correction~\eqref{eq:correct}. 

In this section, we consider some simple time-reversible situations with $E$ nonvanishing and show that in these cases the correction~\eqref{eq:correct} is typically nonzero. Given the bilinearity of this term, it suffices to exhibit a single $b$ for which the term is nonzero.

Recall that $A^2=S^2=I$ and that we can choose coordinates such that
$A=I_{d^+}\oplus I_{d^-}$. Recall also 
that $E^{\gamma\beta}=0$ if $1\le \beta,\gamma\le d^+$ and if $d^+ +1\le \beta,\gamma\le d$, while the remaining entries of $E$ are general. Since $E$ is assumed to be nonvanishing, we have 
$d^\pm\ge1$. 

Our simplifying assumption, which is natural in term-reversible situations, is that $S$ is also diagonal in these coordinates, so
$S=\diag\{S_1,S_2,\dots,S_d\}$ where $S_\beta\in \{\pm 1\}$, 
and that $S\neq\pm I$.
Define $B\subset\{1,\dots,d\}$ so that $S_\beta=1$ if and only if $\beta\in B$.
Then the constraint~\eqref{eq:b} reduces to the constraints
\[
b^{\beta\gamma}(Sx)=b^{\beta\gamma}(x)
\quad\text{for $\beta\in B$, $\gamma>d^+$ and for $\beta\not\in B$, $\gamma\le d^+$},
\]
and
\[
b^{\beta\gamma}(Sx)=-b^{\beta\gamma}(x)
\quad\text{for $\beta\in B$, $\gamma\le d^+$ and for $\beta\not\in B$, $\gamma > d^+$}.
\]

Fix $i\in B$, $j\not\in B$. Then an allowable choice of $b$ is obtained by setting
\[
b^{id}(X)=b^{j1}(X)=X^i
\]
and setting the remaining entries to zero.
Substituting into the sum in~\eqref{eq:correct}, we see immediately that nonzero terms require $\alpha=i$. Then the factor $b^{\alpha\gamma}(X)$ is nonzero only for $\gamma=d$. The column vector $b^\beta(X)$ has nonzero entries only for 
$\beta=1,d$. Since $E^{dd}=0$, the correction~\eqref{eq:correct} reduces to a single term 
\[
\tfrac12 E^{d1}\partial_i b^1(X) b^{id}(X)=\tfrac12 E^{d1}  X^i e_j
\]
where $e_j$ is the canonical unit vector with $1$ in the $j$'th entry.
We know that typically $E^{d1}\neq0$, so this yields a nontrivial correction to the drift term $a$ as required.

\paragraph{Acknowledgements}
This research was supported in part by the visitor program at the Sydney Mathematical Research Institute (SMRI) in early 2020. IM is very grateful for the hospitality of the University of Sydney, and in particular SMRI, at both ends of the pandemic.

We would like to thank Greg Pavliotis for his comments on detailed balance.


\begin{thebibliography}{10}

\bibitem{BassEtAl02}
R.~F. Bass, B.~M. Hambly and T.~J. Lyons. Extending the {W}ong-{Z}akai theorem
  to reversible {M}arkov processes. \emph{J. Eur. Math. Soc. (JEMS)} \textbf{4}
  (2002) 237--269.

\bibitem{BoulvardMemin23}
P.-M. Boulvard and E. M{\'e}min. Diagnostic of the L{\'e}vy area
  for geophysical flow models in view of defining high order stochastic
  discrete-time schemes. \emph{Foundations of Data Science} (2023).


\bibitem{ChernovDolgopyat09}
N.~Chernov and D.~Dolgopyat. Brownian {B}rownian motion. {I}. \emph{Mem. Amer.
  Math. Soc.} \textbf{198} (2009) no.~927, viii+193.

\bibitem{CFKMZ19}
I.~Chevyrev, P.~K. Friz, A.~Korepanov, I.~Melbourne and H.~Zhang. Multiscale
  systems, homogenization, and rough paths. \emph{Probability and Analysis in
  Interacting Physical Systems: In Honor of S.R.S. Varadhan, Berlin, August,
  2016''} (P.~Friz et~al., ed.), Springer Proceedings in Mathematics \&
  Statistics \textbf{283}, Springer, 2019, pp.~17--48.

\bibitem{CFKMZ22}
I.~Chevyrev, P.~K. Friz, A.~Korepanov, I.~Melbourne and H.~Zhang.
  Deterministic homogenization under optimal moment assumptions for fast-slow
  systems. Part~2. \emph{Ann. Inst. Henri Poincar\'{e} Probab. Stat.}
  \textbf{58} (2022) 1328--1350.

\bibitem{DelongEtAl14}
S.~Delong, Y.~Sun, B.~E. Griffith, E.~Vanden-Eijnden and A.~Donev. Multiscale
  temporal integrators for fluctuating hydrodynamics. \emph{Phys. Rev. E}
  \textbf{90} (2014) 063312.

\bibitem{DiamantakisHolmPavliotisArxiv}
T.~Diamantakis, D.~D. Holm and G.~A. Pavliotis. Variational principles on
  geometric rough paths and the {L}\'{e}vy area correction. \emph{SIAM J. Appl.
  Dyn. Syst.} \textbf{22} (2023) 1182--1218.


\bibitem{Fleming22}
N.~Fleming-V\'{a}zquez. Functional correlation bounds and optimal iterated
  moment bounds for slowly-mixing nonuniformly hyperbolic maps. \emph{Comm.
  Math. Phys.} \textbf{391} (2022) 173--198.

\bibitem{Fiorini23}
C. Fiorini, P.-M. Boulvard, L. Li and E. M{\'e}min. A
  two-step numerical scheme in time for surface quasi geostrophic equations
  under location uncertainty. \emph{Stochastic Transport in Upper Ocean
  Dynamics} (B. Chapron, D. Crisan, D. Holm, E. M{\'e}min and
  A. Radomska, eds.), Springer International Publishing, Cham, 2023,
  pp.~57--67.

\bibitem{FrizOberhauser09}
P.~Friz and H.~Oberhauser. Rough path limits of the {W}ong-{Z}akai type with a
  modified drift term. \emph{J. Funct. Anal.} \textbf{256} (2009) 3236--3256.

\bibitem{GM13b}
G.~A. Gottwald and I.~Melbourne. {Homogenization for deterministic maps and
  multiplicative noise}. \emph{Proc. R. Soc. London A} \textbf{469} (2013)
  20130201.

\bibitem{JiangQianQian}
D.-Q. Jiang, M. Qian and M.-P. Qian. \emph{Mathematical Theory of Nonequilibrium Steady States}. Lecture Notes
  In Mathematics \textbf{1833}, Springer, New York, 2004. 

\bibitem{KM16}
D.~Kelly and I.~Melbourne. {Smooth approximation of stochastic differential
  equations}. \emph{Ann. Probab.} \textbf{44} (2016) 479--520.

\bibitem{KKM22}
A.~Korepanov, Z.~Kosloff and I.~Melbourne. Deterministic homogenization under
  optimal moment assumptions for fast-slow systems. Part 1. \emph{Ann. Inst.
  Henri Poincar\'{e} Probab. Stat.} \textbf{58} (2022) 1305--1327.

\bibitem{Lejay05}
A.~Lejay and T.~Lyons. On the importance of the {L}{\'e}vy area for studying
  the limits of functions of converging stochastic processes. {A}pplication to
  homogenization. \emph{Current trends in potential theory}, Theta Ser. Adv.
  Math. \textbf{4}, Theta, Bucharest, 2005, pp.~63--84.

\bibitem{Lim21}
S.~H. Lim. Anomalous thermodynamics in homogenized generalized Langevin
  systems. \emph{Journal of Physics A: Mathematical and Theoretical}
  \textbf{54} (2021) 155001.

\bibitem{LopusanschiSimon18}
O.~Lopusanschi and D.~Simon. L\'{e}vy area with a drift as a renormalization
  limit of {M}arkov chains on periodic graphs. \emph{Stochastic Process. Appl.}
  \textbf{128} (2018) 2404--2426.

\bibitem{MacKay10}
R.~S. MacKay. Langevin equation for slow degrees of freedom of {H}amiltonian
  systems. \emph{Nonlinear dynamics and chaos: advances and perspectives},
  Underst. Complex Syst., Springer, Berlin, 2010, pp.~89--102.

\bibitem{McShane72}
E.~J. McShane. Stochastic differential equations and models of random
  processes. \emph{Proceedings of the {S}ixth {B}erkeley {S}ymposium on
  {M}athematical {S}tatistics and {P}robability ({U}niv. {C}alifornia,
  {B}erkeley, {C}alif., 1970/1971), {V}ol. {III}: {P}robability theory}
  (Berkeley, Calif.), Univ. California Press, 1972, pp.~263--294.

\bibitem{Pavliotis14}
G.~A. Pavliotis. \emph{Stochastic processes and applications}. Texts in Applied
  Mathematics \textbf{60}, Springer, New York, 2014. Diffusion processes, the
  Fokker-Planck and Langevin equations.

\bibitem{PavliotisStuart}
G.~A. Pavliotis and A.~M. Stuart. \emph{Multiscale methods: Averaging and
  homogenization}. Texts in Applied
  Mathematics \textbf{53}, Springer, New York, 2008. 

\bibitem{Sussmann91}
H.~J. Sussmann. Limits of the {W}ong-{Z}akai type with a modified drift term.
  \emph{Stochastic analysis}, Academic Press, Boston, MA, 1991, pp.~475--493.

\bibitem{WongZakai65}
E.~Wong and M.~Zakai. On the convergence of ordinary integrals to stochastic
  integrals. \emph{Ann. Math. Statist.} \textbf{36} (1965) 1560--1564.

\end{thebibliography}
\end{document}